\theoremstyle{plain}
\newtheorem{theorem}{Theorem}[section]          
\newtheorem{corollary}[theorem]{Corollary}     
\newtheorem{lemma}[theorem]{Lemma}              
\newtheorem{proposition}[theorem]{Proposition}
\newtheorem{maintheorem}{Theorem}
\theoremstyle{definition}
\newtheorem{remark}[theorem]{Remark}
\numberwithin{table}{section}
\newcommand{\Z}{\mathbb{Z}}\newcommand{\Q}{\mathbb{Q}}
\newcommand{\R}{\mathbb{R}}\newcommand{\C}{\mathbb{C}}
\newcommand{\QQ}{\mathbb{Q}}
\newcommand{\tensor}{\otimes}
\newcommand{\embedded}{\hookrightarrow}
\newcommand{\V}{\mathcal{V}}
\newcommand{\K}{\mathcal{K}}
\newcommand{\g}{\mathfrak{g}}
\title{Rational ellipticity of $G$-manifolds from their quotients}
\author{Elahe Khalili Samani}
\address[E. Khalili Samani]{Department of Mathematics, \newline \indent Clark University,\newline \indent  Worcester, MA 01610, USA}
\email{ekhalilisamani@clarku.edu}
\author{Marco Radeschi}
\address[M. Radeschi]{Universit\`a degli Studi di Torino,\newline \indent Dipartimento di matematica ``G. Peano'',\newline \indent Via Carlo Alberto 10,\newline \indent 10123 Torino (TO), Italy}
\email{marco.radeschi@unito.it}
\keywords{Rationally elliptic, submetry, isometric group action, topological entropy}
\subjclass{53C22, 58E05}
\begin{document}

\begin{abstract} We prove that if a compact, simply connected Riemannian $G$-manifold $M$ has orbit space $M/G$ isometric to some other quotient $N/H$ with $N$ having zero topological entropy, then $M$ is rationally elliptic. This result, which generalizes most conditions on rational ellipticity, is a particular case of a more general result involving manifold submetries.
\end{abstract}

\maketitle

\section{Introduction}

When studying Riemannian manifolds with lower sectional curvature bound, one important principle guiding the open problems in the area is that non-negative sectional curvature should somehow imply ``simple topology''. One of the most precise and far-reaching versions of this principle is the Bott-Grove-Halperin conjecture, stating that a closed simply-connected Riemannian manifold with non-negative curvature is \emph{rationally elliptic}, that is, the total rational homotopy $\pi_*(M)\otimes \Q:=\bigoplus_{i\geq 2} \pi_i(M)\otimes \Q$ is finite dimensional. Since by \cite{FH79} rational ellipticity is known to imply a number of important topological restrictions conjectured to hold in non-negative curvature, such as a sharp upper bound on the total Betti numbers or the non-negativity of the Euler characteristic (cf. for example \cite{FHT01}, Ch. 32 and references therein), finding geometric conditions that imply rational ellipticity is particularly desirable.

On the one hand, in \cite{PP06} Paternain and Petean proved that rational ellipticity follows from a dynamical concept called \emph{zero topological entropy} (cf. Section \ref{SS:topent} for the definition). On the other hand, a series of papers
shows that a closed, simply connected Riemannian manifold $M$ with an isometric action by a compact Lie group $G$ is rationally elliptic, under either of the following conditions:
\begin{itemize}
\item $\dim M/G=1$ (Grove-Halperin \cite{GH87}).
\item The $G$-action is \emph{polar} (cf. Section \ref{S:polar}) and the section is either flat or spherical (Grove-Ziller \cite{GZ12}).
\item $M$ is \emph{almost non-negatively curved} and $\dim M/G=2$ (Grove-Wilking-Yeager \cite{GWY19}).
\end{itemize}

The main result of this paper is a new geometric condition to rational ellipticity, which somehow merges the symmetry and topological entropy conditions into a unique framework:

\begin{maintheorem}\label{main-cor:G-manifolds}
Let $M$ and $N$ be closed Riemannian manifolds which admit isometric actions by compact Lie groups $G$ and $H$, respectively. Assume that the quotient spaces $M/G$ and $N/H$ are isometric. If $M$ is simply connected and $N$ has zero topological entropy, then $M$ is rationally elliptic. 
\end{maintheorem}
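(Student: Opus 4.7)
The plan is to reduce Theorem A to a more general statement about manifold submetries, as hinted in the abstract. First I would observe that both orbit projections $\pi_M \colon M \to M/G$ and $\pi_N \colon N \to N/H$ are instances of manifold submetries: the orbits are closed smooth submanifolds, and the quotient distance is compatible with horizontal lifts in the usual O'Neill-type sense. Under the given isometry one may identify $M/G = N/H =: X$, obtaining two manifold submetries $M \to X$ and $N \to X$ onto a common base.

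The main work is to establish the following general statement: \emph{if $M$ is a closed simply connected Riemannian manifold admitting a manifold submetry onto a space $X$, and $X$ also receives a manifold submetry from a closed manifold $N$ with zero topological entropy, then $M$ is rationally elliptic.} Theorem A then follows at once by applying this statement to $M$, $N$, and their common quotient.

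To prove the general theorem I would proceed in two stages. The first stage is to transfer the zero-entropy condition from $N$ down to $X$: zero topological entropy of the geodesic flow on $N$ should force subexponential growth of ``horizontal'' geodesics and hence of the geodesic structure in the base of any submetry $N \to X$, so one defines and bounds an intrinsic entropy-type invariant for $X$ itself. The second stage is to lift this information through the submetry $M \to X$: the fibers of a manifold submetry are closed, homogeneous-like, and rationally elliptic by classical results, and a rational-homotopy argument in the spirit of Paternain-Petean, applied to $M$ with subexponentially-growing ``base geodesics'' coming from $X$ and elliptic fibers, should force rational ellipticity of $M$. Simple connectivity of $M$ enters here in the standard way, enabling one to run minimal-model or long-exact-sequence arguments for rational homotopy.

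The main obstacle is that $X$ is in general a singular metric space rather than a smooth manifold, so neither ``topological entropy'' nor ``rational ellipticity'' has an immediate meaning on $X$ itself. One must therefore develop an appropriate notion of topological entropy for a manifold submetry, prove that it is dominated by the entropy of the total space, and show that it in turn controls the rational-homotopy growth of any other manifold submetry over the same base $X$. Making this transfer of entropy rigorous, and reconciling it with the stratified singular structure of $X$ coming from the two different presentations, is the technical heart of the argument; once it is in place, Theorem A follows from the equality $M/G = N/H$ together with the Paternain-Petean theorem applied in this submetric setting.
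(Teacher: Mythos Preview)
Your high-level architecture---reduce to a statement about two manifold submetries over a common base $X$, push the zero-entropy information from $N$ down to $X$, then pull it back up to $M$---is exactly the paper's strategy. But two concrete gaps separate your proposal from a proof.

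First, the general statement you formulate is too strong. The paper's general theorem (Theorem~B) reads: if $\pi_M:M\to X$ and $\pi_N:N\to X$ are manifold submetries with $M$ simply connected, $N$ of zero entropy, \emph{and the generic fibers of $\pi_M$ rationally elliptic}, then $M$ is rationally elliptic. You drop that fiber hypothesis and instead assert that ``the fibers of a manifold submetry are closed, homogeneous-like, and rationally elliptic by classical results.'' This is false: any Riemannian submersion is a manifold submetry, and its fibers can be arbitrary closed manifolds. Without the elliptic-fiber hypothesis the conclusion need not hold, and the paper does not claim it. In the specific situation of Theorem~A the fibers are principal $G$-orbits, hence compact homogeneous spaces, but even here ``rationally elliptic by classical results'' hides real work: the paper shows the principal orbit $L$ carries a normal homogeneous metric with zero entropy (via Ma\~n\'e's formula and a polynomial bound on geodesic counts from Paternain), invokes \cite{RKS23} to get that $L$ is nilpotent, and only then applies Paternain--Petean to conclude $\sum_i\dim\pi_i(L)\otimes\Q<\infty$.

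Second, the mechanism you sketch for the transfer through $X$ is not the one that actually works. The paper does \emph{not} define an intrinsic entropy for the singular space $X$. Instead it exploits the fact that horizontal geodesics in $M$ and in $N$ project to the same set of \emph{quotient geodesics} in $X$; counting them gives, via Paternain's inequality, $h_{\text{top}}(N)\geq \limsup_T \frac{1}{T}\log n_T(\pi_M^{-1}(x),y')$ for generic $y'\in M$. Morse theory on the path space $\Omega(L,y')$ then converts this into subexponential growth of $\sum_{j\le i} b_j(\Omega(L,y'))$, but only after a Gromov-type lemma (Lemma~3.1/Corollary~3.2) guaranteeing that every class in $H_i(\Omega(L,y'))$ is represented in $\Omega^{Ci}(L,y')$. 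Finally, rational ellipticity of $M$ comes from the fibration $\Omega(L,y')\to L\to M$ via Theorem~A.1 of \cite{GWY19}, which needs precisely the elliptic-fiber hypothesis you omitted. Your outline mentions none of these three ingredients (quotient-geodesic counting, the Gromov bound on loop-space cycles, the GWY fibration criterion), and ``define an entropy on $X$ and show it dominates rational-homotopy growth'' is not a substitute for them.
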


\begin{remark}
Theorem \ref{main-cor:G-manifolds} implies all the results mentioned above, since:
\begin{enumerate}
\item Applying the theorem with $M=N$ and $G=H=\{e\}$ implies the result of Paternain and Petean \cite{PP06}.
It should be mentioned, however, that the result of Paternain and Petean also applies when $M$ is not simply connected.
\item If $\dim M/G=1$, then applying the theorem with $N=S^1$, $H=\Z_2$ (acting on $S^1$ as a reflection) gives the result of Grove-Halperin \cite{GH87}.
\item If the $G$-action is \emph{polar} and the section is flat or spherical, then letting $N$ be a compact quotient of the section, and $H$ the Weyl group acting on $N$, one gets the result of Grove-Ziller \cite{GZ12}.
\item If $M$ is \emph{almost non-negatively curved} and $\dim M/G=2$, then it was proved in \cite{GWY19} that up to changing the metric on $M$, the quotient $M/G$ is isometric to either $S^3/S^1$, $S^2/\Gamma$ or $T^2/\Gamma'$ for some discrete groups of isometries $\Gamma, \Gamma'$. In either case, there is an obvious choice of $N$ and $H$ in Theorem \ref{main-cor:G-manifolds} which implies the result of Grove-Yeager-Wilking \cite{GWY19}.
\end{enumerate}
At the same time, however, our result uses in a fundamental way the results in the aforementioned papers, especially the approaches of Paternain-Petean and Grove-Yeager-Wilking.
\end{remark}

\begin{remark}
By the result of \cite{PP06}, the manifold $N$ in Theorem \ref{main-cor:G-manifolds} is in particular rationally elliptic. We do not know whether it is possible to relax the assumption of Theorem \ref{main-cor:G-manifolds} to $N$ being rationally elliptic. If this was possible, it would mean that the rational ellipticity of a $G$-manifold $M$ depends only on the geometry of the orbit space $M/G$.
\end{remark}

Theorem \ref{main-cor:G-manifolds} is a special case of the following more general result, in which quotients by isometric actions get replaced by \emph{manifold submetries} (cf. Section \ref{SS:quotient geodesics}):
\begin{maintheorem}\label{main-thm: foliations}
\todo{Changed phrasing of theorem (Point 1)}
Let $M, N$ be closed Riemannian manifolds, with $M$ simply connected and $N$ with zero topological entropy. Suppose $\pi_M:M\to X$ and $\pi_N:N\to X$ are manifold submetries. If the generic fibers of $\pi_M$ are connected with finite fundamental group and rationally elliptic universal cover, then $M$ is rationally elliptic. 
\end{maintheorem}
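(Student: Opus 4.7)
The goal is to verify rational ellipticity of $M$ by establishing subexponential growth of the rational Betti numbers $\dim H_i(\Omega M;\Q)$ in $i$. By the Friedlander--Halperin dichotomy, this is equivalent to rational ellipticity. This is also the mechanism underpinning the Paternain--Petean theorem, which I would imitate, but with the entropy information imported from $N$ rather than from $M$, and with the ``vertical'' contribution handled via the rational ellipticity of the generic fibers of $\pi_M$.

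The first step is to transfer the zero-entropy condition from $N$ to the base $X$. By Ma\~n\'e's inequality, zero topological entropy on $N$ forces the number $n_T(p,q)$ of geodesic arcs of length $\leq T$ between a generic pair of points $p,q\in N$ to grow subexponentially in $T$. Using the theory of quotient geodesics for the manifold submetry $\pi_N$ (Section \ref{SS:quotient geodesics}), every geodesic arc of $N$ projects to a quotient geodesic on $X$, and conversely every quotient geodesic on $X$ admits a horizontal lift in $N$. This should yield a subexponential bound, in $T$, for the number of quotient geodesics on $X$ of length $\leq T$ between a generic pair of points.

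The second step is to lift this estimate to $M$ via $\pi_M$. Horizontal lifts of quotient geodesics in $X$ are geodesics in $M$, and on the regular stratum the geodesic arcs in $M$ between two generic points correspond, up to the vertical degrees of freedom coming from the generic fiber $F$ of $\pi_M$, to quotient geodesics on $X$ between their images. Since $F$ is rationally elliptic, $\dim H_i(\Omega F;\Q)$ grows polynomially. Combining the subexponential ``horizontal'' count on $X$ with the polynomial ``vertical'' contribution of $F$ --- via a Serre-type spectral sequence for the submetry along the regular stratum, or via Morse theory for the energy functional on paths of $M$ with fixed projection to $X$ --- one obtains subexponential growth of $\dim H_i(\Omega M;\Q)$ in $i$, and rational ellipticity of $M$ follows.

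The main obstacle will be the rigorous matching of the geometric counts on $X$ with the homotopical counts on $M$ in the presence of the singular stratification. Specifically, one must handle singular fibers of $\pi_M$ (which need not be rationally elliptic), verify that horizontal lifts crossing the singular strata do not inject a hidden exponential factor, and check that projection to $X$ provides enough control on the path space of $M$ to feed the spectral sequence argument. Resolving this delicate interface is where the structure theory of manifold submetries, together with a careful Morse-theoretic analysis of the energy functional on quotient-compatible paths, will do the real work; once it is in place, the rest of the argument is a direct combination of Ma\~n\'e/Paternain--Petean-type estimates with the polynomial growth provided by a rationally elliptic fiber.
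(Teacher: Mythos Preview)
Your high-level strategy---push the zero entropy down to $X$ via quotient geodesics, then pull it back up to $M$ and feed in the ellipticity of the fiber---is the right one, and matches the paper. But Step~2 as you describe it has a real gap.

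The problem is your attempt to control $\Omega M$ directly by factoring geodesics of $M$ into a ``horizontal'' part counted on $X$ and a ``vertical'' part coming from $\Omega F$. Geodesics of $M$ between two points are not, in general, horizontal for $\pi_M$, and they do not project to quotient geodesics on $X$; so the subexponential quotient-geodesic count on $X$ gives no direct bound on $n_T(p,q)$ in $M$. The Serre-spectral-sequence variant you suggest would have to run over the fibration $F\to M^{\mathrm{reg}}\to X^{\mathrm{reg}}$, but neither $M^{\mathrm{reg}}$ nor $X^{\mathrm{reg}}$ is the space you need, and there is no evident way to extend the argument across the singular strata; nor is there a Gromov-type ``short cycles'' lemma available for the (noncompact, non-manifold) base $X^{\mathrm{reg}}$.

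The paper sidesteps all of this by changing the path space. Instead of $\Omega M$, it works with $\Omega(L,y')$, the space of paths in $M$ from a fixed generic fiber $L=\pi_M^{-1}(x)$ to a point $y'$. Critical points of the energy functional on $\Omega^T(L,y')$ are exactly the horizontal geodesics from $L$ to $y'$, and those are in bijection with quotient geodesics of length $\le T$ from $x$ to $\pi_M(y')$ in $X$. Thus the count transferred from $N$ (via Paternain's inequality and Fubini, not Ma\~n\'e's point-to-point formula) bounds $\sum_j b_j(\Omega^T(L,y'))$ directly, with no vertical bookkeeping. To pass from this Morse bound to a bound on $b_j(\Omega(L,y'))$ one needs a relative version of Gromov's ``every class is represented by short cycles'' lemma, for pairs $(M,L)$ rather than for $M$ alone; the paper proves exactly this (Lemma~\ref{lem:Ci} and Corollary~\ref{cor:Gromov}), and it is the main new technical ingredient. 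Only at the very end does the fiber enter, via the homotopy fibration $\Omega(L,y')\to L\to M$: since $L$ is rationally elliptic (and nilpotent) and $\Omega(L,y')$ has subexponential Betti growth, Theorem~A.1 of \cite{GWY19} gives that $M$ is rationally elliptic. So the fiber's ellipticity is used once, algebraically, not as a running polynomial correction to a geodesic count.
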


Recall that an action of a Lie group on a Riemannian manifold is called
\emph{infinitesimally polar} if all of its isotropy representations are polar.
By Lytchak-Thorbergsson \cite{LT10}, this is equivalent to the quotient space of the action being an orbifold.
As an application of Theorem \ref{main-cor:G-manifolds}, we prove the following:

\begin{maintheorem}\label{main-thm:infinitesimally polar}
Suppose $M$ is a closed, simply connected, non-negatively curved Riemannian manifold. If $M$ admits an isometric,
cohomogeneity three, infinitesimally polar action by a compact Lie group $G$, then $M$ is rationally elliptic. 
\end{maintheorem}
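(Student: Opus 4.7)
The strategy is to derive Theorem C from Theorem A. Since the $G$-action is infinitesimally polar, the theorem of Lytchak-Thorbergsson implies that $X := M/G$ is a closed Riemannian orbifold; by the cohomogeneity hypothesis it has dimension three, and since $M$ is non-negatively curved and $G$ acts by isometries, $X$ inherits non-negative sectional curvature in the Alexandrov sense. What remains is to exhibit a closed Riemannian manifold $N$ with zero topological entropy, together with an isometric action of a compact Lie group $H$ on $N$, such that $N/H$ is isometric to $X$; Theorem A then yields rational ellipticity of $M$.

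To construct such a pair $(N,H)$, I would appeal to the geometrization of closed three-dimensional orbifolds. Since rational ellipticity is a diffeomorphism invariant of $M$, one is free to replace the $G$-invariant metric on $M$ by a more convenient one, which modifies the induced metric on $X$. The key input is that among the eight Thurston geometries, only the spherical ($S^3$), Euclidean ($\R^3$), and $S^2 \times \R$ geometries admit non-negative sectional curvature; Nil, Sol, $\widetilde{SL_2}$, $\HH^3$, and $\HH^2 \times \R$ all force some sectional curvatures to be negative. Combining orbifold geometrization with this curvature restriction, and using a Cheeger-type deformation of the $G$-invariant metric on $M$ to match the quotient with a model geometry, one aims to arrange that $X$ is isometric to $\Gamma\backslash Y$, where $Y$ is one of the three allowed model spaces with its standard metric and $\Gamma$ is a discrete cocompact group of isometries of $Y$. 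By Selberg's lemma, $\Gamma$ contains a torsion-free subgroup $\Gamma_0$ of finite index; setting $N := \Gamma_0\backslash Y$ and $H := \Gamma/\Gamma_0$ then produces the required data, since $N$ is a closed Riemannian 3-manifold on which the finite group $H$ acts isometrically with $N/H$ isometric to $X$.

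In each of the three cases $N$ is, respectively, a spherical space form, a flat 3-manifold, or a finite quotient of $S^2 \times S^1$ with its product metric; all three have zero topological entropy, because in each case the geodesic flow is either periodic, a completely integrable linear flow, or a product of such flows. Theorem A therefore applies and delivers rational ellipticity of $M$. The principal difficulty in this plan is the metric-modification step: in cohomogeneity two, the analogous reduction is carried out by Grove-Wilking-Yeager via a direct analysis of two-dimensional non-negatively curved orbifolds, but in three dimensions it requires combining the orbifold geometrization theorem with a careful $G$-equivariant deformation ensuring that $X$ becomes isometric---not merely diffeomorphic---to a quotient of one of the three model geometries. Once that is established, everything else is a routine application of Theorem A and the standard facts about geodesic flows on the three model spaces.
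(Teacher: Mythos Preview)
Your overall strategy coincides with the paper's: use Lytchak--Thorbergsson to identify $M/G$ as a non-negatively curved closed $3$-orbifold, realize it as a quotient $N/H$ with $N$ of zero topological entropy, and then invoke Theorem~\ref{main-cor:G-manifolds}. You also correctly isolate the one nontrivial step, namely upgrading the orbifold \emph{diffeomorphism} $M/G\cong N/H$ to an \emph{isometry} after suitably changing the $G$-invariant metric on $M$. However, you leave this step unresolved, and the tool you suggest---a ``Cheeger-type deformation''---does not do the job: Cheeger deformations rescale the metric along the $G$-orbits and leave the induced quotient metric on $M/G$ unchanged, so no such deformation can make $M/G$ isometric to a prescribed model.

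The paper closes this gap differently and more directly. It proves (Proposition~\ref{prop:orbifold metric}, supported by Lemma~\ref{lem:slice to tubular}) that \emph{any} orbifold metric $g_0$ on $M/G$ lifts to a $G$-invariant Riemannian metric $g$ on $M$ for which $\Pi:(M,g)\to (M/G,g_0)$ is an orbifold Riemannian submersion. With this in hand, one only needs an orbifold \emph{diffeomorphism} $M/G\to N/H$; the paper obtains it from Kleiner--Lott's classification of compact non-negatively curved $3$-orbifolds, which yields $N\in\{S^3,\,T^3,\,S^1\times S^2,\,S^1\times S^3\}$ with a normal homogeneous metric. One then pulls back the quotient metric to $M/G$ and lifts via Proposition~\ref{prop:orbifold metric}. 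Your route through full orbifold geometrization plus Selberg's lemma would reach a comparable list of model manifolds, but it is heavier machinery for the same conclusion, and in any case does not supply the missing lifting lemma that is the real content of this step.
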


We believe that Theorem \ref{main-thm:infinitesimally polar} could also be proved using the techniques in Grove-Wilking-Yeager \cite{GWY19}, although that would involve more of a case-by-case study of the possible quotient spaces. On the other hand, we believe Theorem \ref{main-cor:G-manifolds} could be used in other situations as well, such as, proving rational ellipticity for non-negatively curved manifold of cohomogeneity 3 (without the infinitesimally polar condition), at least for positively curved manifolds. This is the subject of an ongoing project, which will be the topic of a forthcoming paper.

The structure of the paper is as follows: In Section \ref{S:preliminaries}, we recall the basic facts about the concepts used throughout the paper. In Section \ref{S:G-manifolds}, we prove Theorem \ref{main-thm: foliations} and Theorem \ref{main-cor:G-manifolds} by generalizing a fundamental technical lemma of \cite{PP06}. Finally, in Section \ref{S:infinitesimally polar}, we prove Theorem \ref{main-thm:infinitesimally polar}.

\subsection*{Acknowledgements}
The authors wish to thank Fred Wilhelm for his interest in the manuscript, and helpful feedback. Furthermore, the authors wish to thank the anonymous referee for thoroughly reading the manuscript, spotting imprecisions, and suggesting improvements and corrections.

\section{Preliminaries}\label{S:preliminaries}

\subsection{Alexandrov spaces}\label{SS:Alexandrov}

A metric space $(X,d)$ is called a \emph{length space} if for any pair of points $x, y$ in $X$
the distance between $x$ and $y$ is the infimum of the lengths of curves connecting $x$ and $y$, and a \emph{geodesic space} if this infimum is always achieved by some shortest curve. An \emph{Alexandrov space} is a geodesic space 
with a lower curvature bound in the comparison geometry sense (for more details, see \cite{BBI01} and \cite{BGP92}).
Given an Alexandrov space $X$, a curve $\gamma:I\to X$ which locally minimizes the distance is called a \emph{geodesic}.
If $\gamma_1$ and $\gamma_2$ are two geodesics starting at $x\in X$, the angle between them is defind as the limit
$$\lim_{y, z\to x}\measuredangle yxz,$$
where $y$ and $z$ are points on $\gamma_1$ and $\gamma_2$, respectively. Two geodesics starting at $x$ 
are called equivalent if the angle between them equals zero. Let $\Sigma'_xX$ denote the set of equivalence classes 
of geodesics starting at $x$. Then $\Sigma'_xX$ forms a metric space in which the distance is the angle between the geodesics.
The metric completion of $\Sigma'_xX$, denoted by $\Sigma_xX$, is called the \emph{space of directions} of $X$ at $x$.
The cone over $\Sigma_xX$, which is defined by
$$T_xX:=\left(\Sigma_xX\times[0,\infty)\right)/{(x_1,0)\sim (x_2,0)},$$
is called the \emph{tangent cone} of $X$ at $x$. 

\subsection{Manifold submetries}\label{SS:quotient geodesics}

Let $M$ be a Riemannian manifold, and let $X$ be a metric space. A manifold submetry from $M$ to $X$
is a continuous map $\pi:M\to X$ such that the fibers of $\pi$ are submanifolds of $M$, and moreover,
$\pi$ sends metric balls to metric balls of the same radius. Given a compact Riemannian manifold $M$
and a manifold submetry $\pi:M\to X$, the fibers of $\pi$ are equidistant. If $M$ has a lower bound $\kappa$ on its sectional curvature, then $X$ is an Alexandrov space with the same lower curvature bound $\kappa$.

\begin{remark}
It is known that, given a manifold submetry $\pi:M\to X$ and a vector $x\in TM$ perpendicular to the $\pi$-fiber through that point, the geodesic $\exp(tx)$ is perpendicular to all the $\pi$-fibers it meets (cf. for example \cite{LK22}, Proposition 12.5). Such a geodesic is called a \emph{horizontal geodesic}.

One important observation made in \cite[Lemma 12]{MR20}, which will be useful to us, is that given a manifold submetry $\pi:M\to X$, the projections on $X$ of horizontal geodesics in $M$ can be defined metrically purely in terms of $X$ - such projections are called \emph{quotient geodesics}. As a consequence, given manifold submetries $\pi:M\to X$ and $\pi':M'\to X$ over the same base $X$, the $\pi$-projections of $\pi$-horizontal geodesics in $M$ coincide with the $\pi'$-projections of the $\pi'$-horizontal geodesics in $M'$, which coincide with the quotient geodesics of $X$.
\end{remark}

\subsection{The topological entropy}\label{SS:topent}

Let $(X,d)$ be a compact metric space, and let $\phi_t:X\to X$ be a continuous flow, that is, a continuous map 
satisfying $\phi_{t+s}=\phi_t\circ\phi_s$. For any $T>0$, define a new metric $d_T$ on $X$ by
$$d_T(x,y)=\max_{0\leq t\leq T}d(\phi_t(x),\phi_t(y)).$$
Let $N_T^{\epsilon}$ denote the minimum number of $\epsilon$-balls in the metric $d_T$ that are needed to cover $X$.
The topological entropy of the flow $\phi_t$ is defined by
$$h_{\text{top}}(\phi_t)=\lim_{\epsilon\to 0}\limsup\limits_{T\to\infty}\frac{1}{T}\log(N_T^{\epsilon}).$$

For a closed Riemannian manifold $M$ with tangent bundle $TM$, the geodesic flow $\phi_t:TM\to TM$
of $M$ is defined by $\phi_t(x,v)=(\gamma_v(t),\gamma'_v(t))$, where $\gamma_v(t)$ is the unique geodesic 
satisfying $\gamma_v(0)=x$ and $\gamma'_v(0)=v$. The geodesic flow leaves invariant the unit tangent bundle $SM$ of $M$ 
and it thus induces a geodesic flow $\phi_t:SM\to SM$. The topological entropy of $M$, denoted by $h_{\text{top}}(M)$, 
is by definition the topological entropy of the geodesic flow $\phi_t:SM\to SM$.

\todo{Section on simplicial approximation removed (Point (3))}

\subsection{Polar actions}\label{S:polar}

An isometric action of a Lie group $G$ on a closed Riemannian manifold $M$ is called polar if there exists a closed, connected, 
immeresed submanifold $\Sigma$ of $M$, called a section, that meets all the $G$-orbits orthogonally.
Given a polar $G$-manifold $M$ with a section $\Sigma$, the normalizer $N(\Sigma)$ and the centralizer $Z(\Sigma)$ 
of $\Sigma$ are the subgroups of $G$ defined by $N(\Sigma)=\{g\in G\mid g\cdot\Sigma=\Sigma\}$
and $Z(\Sigma)=\{g\in G\mid g\cdot p=p, \forall p\in\Sigma\}$. Note that $N(\Sigma)$ acts on $\Sigma$ 
and the kernel of this action equals $Z(\Sigma)$. Hence there is an effective action of ${N(\Sigma)}/{Z(\Sigma)}$ 
on $\Sigma$. The quotient group $W(\Sigma):={N(\Sigma)}/{Z(\Sigma)}$ is called the generalized Weyl group of the action.
It is a well-known fact that the inclusion $\iota:\Sigma\embedded M$ induces an isometry ${\Sigma}/{W(\Sigma)}\to M/G$ 
(see, for example, \cite[Proposition 1.3.2]{Gor22}).

\section{The proof of Theorem \ref{main-thm: foliations}}\label{S:G-manifolds}
\todo{Section title changed. New Initial paragraph, Lemma 3.1, Corollary 3.2, Proposition 3.3 (Point (3))}
The goal of this section is to prove Theorem \ref{main-thm: foliations}. We begin by analyzing the action of $\pi_1(L)$ on $H_*(\tilde{L},\QQ)$ when $L$ is the connected fiber of a manifold submetry $\pi:M\to X$, with $\pi_1(M)=0$ and $\pi_1(L)$ finite.

\begin{lemma}\label{L:nilptriv}
Assume $L$ is a nilpotent manifold with $\pi_1(L)$ finite. Then the action of $\pi_1(L)$ on $H_*(\tilde{L},\QQ)$ is trivial.
\end{lemma}
\begin{proof}
For each $j$ consider the representation $\rho_j:\pi_1(L)\to GL(H_j(\tilde{L},\QQ))$. Since the action is nilpotent, every $g\in \operatorname{Im}(\rho_j)$ is a unipotent matrix. On the other hand, since $\pi_1(L)$ is finite, every $g\in \operatorname{Im}(\rho_j)$ is diagonalizable over $\mathbb{C}$, but the only unipotent diagonalizable matrix is the identity.
\end{proof}

\begin{corollary}
Given a manifold submetry $\pi:M\to X$ with $M$ simply connected, if the generic fiber $L$ is connected and has finite fundamental group, then $\pi_1(L)$ acts trivially on $H_*(\tilde{L},\QQ)$.
\end{corollary}
\begin{proof}
By Lemma \ref{L:nilptriv} it is enough to show that $L$ is a nilpotent space. Recall that $M$ admits a stratification, where each stratum $\Sigma_r$ consists of the union of $r$-dimensional fibers of $\pi$. The result is then a consequence of the following facts:
\begin{itemize}
\item The union $M^{(2)}$ of strata of codimension $\leq 2$ is an open, dense, simply connected submanifold of $M$, containing all the principal fibers. (Appendix B of \cite{MR20}).
\item The fibers of $\pi|_{M^{(2)}}:M^{(2)}\to X$ are leaves of a singular Riemannian foliation $(M^{(2)},\mathcal{F})$. (Lemma 49 of \cite{MR20}).
\item Given a singular Riemannian foliation $(M^{(2)},\mathcal{F})$ on a simply connected manifold with compact leaves and finitely many strata, the principal leaves are nilpotent spaces. (Theorem A of \cite{KR24}).
\end{itemize}
\end{proof}

Given a compact Riemannian manifold $M$, a point $p\in M$, and a closed submanifold $L$ of $M$, we denote by $\Omega(L,p)$ the space of piecewise smooth paths starting from some point in $L$ and ending at $p$. Moreover, for any real number $r$, $\Omega^r(L,p)$ denotes the subset of $\Omega(L,p)$ consisting of paths of length at most $r$.

\todo{Old lemmas here removed (Point (3))}

\begin{proposition}\label{P:Gromov}
Let $M$ be a compact, simply connected Riemannian manifold, and let $L\subseteq M$ be a connected, compact submanifold such that $|\pi_1(L)|<\infty$ and $\pi_1(L)$ acts trivially on $H_*(\tilde{L},\QQ)$. Then there is a constant $C$, depending on $M$ and $L$, such that for every $k\in \mathbb{N}$ the inclusion $\iota:\Omega^{Ck}(L, p)\to \Omega(L,p)$ induces surjective maps $\iota_*:H_{j}(\Omega^{Ck}(L, p),\QQ)\to H_{j}(\Omega(L, p),\QQ)$, $\forall j\leq k$.
\end{proposition}

\begin{proof}
This proof is an adaptation of Lemma 3.1 of \cite{PP95}, where the result was proved in the case $\pi_1(L)=0$.  Introduce simplicial structures on $L$ and its universal cover $\tilde{L}$ such that the covering map $c:\tilde{L}\to L$ is a simplicial map. Up to refinement, we can extend the simplicial structure on $L$ to one in $M$ 
\todo{Added citation (Point (2))}
(cf. \cite{Ver}, Section 8.11) and in particular, we can obtain a simplicial structure on the mapping cylinder
\[
M_c=((\tilde{L}\times[0,1])\coprod M)/((\tilde{p},1)\sim c(p)),\qquad \varphi_c:M_c\to M\quad \left\{\begin{array}{l}\varphi_c(\tilde{p},t)=c(\tilde{p})\\ \varphi_c(q\in M)=q
\end{array}\right.
\]
Recall that the inclusion map $M\to M_c$ and the map $\varphi_c:M_c\to M$ are homotopy equivalences, and $\tilde{L}$ can be identified with the subspace $\tilde{L}\times \{0\}\subseteq M_c$. Finally, we remark that the simplicial structure on $M_c$ can be chosen so that the maps $\tilde{L}\to M_c$, $M\to M_c$ and $M_c\to M$ are simplicial maps.
Notice that the (simplicial) action of $\pi_1(L)$ on $\tilde{L}$ induces an action on $M_c$ by
\[
g\cdot (\tilde{p},t)=(g\cdot \tilde{p},t),\qquad g\cdot (p\in M)=p,
\]
and, in particular, it induces a free action of $\pi_1(L)$ on $\Omega(\tilde{L},p)$, $p\in M$, such that the map $\hat{\varphi}_c:\Omega(\tilde{L},p)\to \Omega(L,p)$, $(\hat{\varphi}_c(\gamma))(t):=\varphi_c(\gamma(t))$ is $\pi_1(L)$-invariant, while the evaluation map $\Omega(\tilde{L},p)\to \tilde{L}$ is $\pi_1(L)$-equivariant. Therefore, in the following diagram with fibration rows, the columns are fibrations as well:
\begin{center}
\begin{tikzcd}
\Omega M_c\arrow[d,"\simeq" ']\arrow{r}&\Omega(\tilde{L},p)\arrow{d}{\hat{\varphi}_c} \arrow{r}& \tilde{L}\arrow{d}{c}\\ 
\Omega M\arrow{r}&\Omega({L},p) \arrow{d} \arrow{r}& L\arrow{d}\\
&B\pi_1(L) \arrow{r}{\operatorname{id}}& B\pi_1(L)
\end{tikzcd}
\end{center}
Since $\pi_1(L)$ acts trivially on $H_*(\tilde{L})$, we can apply the Serre spectral sequence to the fibration $\tilde{L}\to L\to B\pi_1(L)$ and obtain $H_*(\tilde{L},\QQ)\simeq H_*(L,\QQ)$. Recall furthermore that in the fibration $\Omega M\to \Omega(L,p)\to L$, the action of $\pi_1(L)$ on $\Omega M$ factors through $\pi_1(L)\to \pi_1(M)=0$ (cf. Exercise 4.3.10 in \cite{Hat02}), thus again the Serre spectral sequence can be applied. Finally, since the spectral sequence of $\Omega M_c\to \Omega(\tilde{L},p)\to \tilde{L}$ is the pullback of the spectral sequence for $\Omega M\to \Omega(L,p)\to L$ we obtain that $\hat{\varphi}_c:H_*(\Omega(\tilde{L},p),\QQ)\to H_*(\Omega(L,p),\QQ)$ is an isomorphism.

Since $\tilde{L}$ is simply connected, the same arguments of Lemma 2.1 in \cite{PP95} go through, and we obtain the existence of a map $\alpha:(M_c,\tilde{L})\to (M_c, \tilde{L})$ with the following properties:
\begin{enumerate}
\item the map $\hat{\alpha}:\Omega(\tilde{L},p)\to \Omega(\tilde{L},\alpha(p))$ given by $\hat{\alpha}(\gamma)(t)=\alpha(\gamma(t))$ induces the identity in homology.
\item $\alpha$ sends the 1-skeleton of $M_c$ to a point.
\end{enumerate}
Letting $\beta=\varphi_c\circ \alpha:(M_c,\tilde{L})\to (M,L)$ and $\hat{\beta}=\hat{\pi}_c\circ \hat{\alpha}:\Omega(\tilde{L},p)\to \Omega(L, \beta(p))$, which from the previous paragraph induce isomorphisms in rational cohomology, we can then apply the arguments of Lemma 3.1 in \cite{PP95} as follows. Define the subspace $\Omega^{pl}(\tilde{L},p)\subset \Omega(\tilde{L},p)$ of paths that are linear on each simplex of $M_c$, endowed with a cellular structure with cells $\Phi:\Delta_1\times\ldots \Delta _m\to \Omega^{pl}(\tilde{L},p)$, where $\Delta_1,\ldots, \Delta_m$ is a sequence of simplices in $M_c$ such that $\Delta_1\subseteq \tilde{L}$ and $\Delta_i,\Delta_{i+1}$ are faces of a common simplex, and where $\Phi(q_1,\ldots q_m)$ is the unique piecewise linear curve passing through the $q_i$'s. Letting $\Omega^{pl}(\tilde{L},q)^{(k)}$ denote the $k$-skeleton, the same computations as Lemma 3.1 in \cite{PP95} give that for $\gamma\in \Omega^{pl}(\tilde{L},q)^{(k)}$
\[
\operatorname{length}(\hat{\beta}(\gamma))=\operatorname{length}(\hat{\beta}(\gamma))-\operatorname{length}(\hat{\beta}(\gamma|_{M_c^{(1)}}))\leq Ck,
\]
where $C$ depends on the piecewise linear metric in $M_c$ and the Lipschitz constant of $\beta$. From this we have $\hat{\beta}(\Omega^{pl}(\tilde{L},q)^{(k)})\subseteq \Omega^{Ck}(L,q)$, and from the commutative diagram in rational cohomology
\begin{center}
\begin{tikzcd}
H_j(\Omega^{pl}(\tilde{L},q)^{(k)},\QQ) \arrow[d, "\hat{\beta}"]\arrow[r,"\simeq"]&H_j(\Omega^{pl}(\tilde{L},q),\QQ)  \arrow[d,"\hat{\beta}", "\simeq" ']\\
H_j(\Omega^{Ck}({L},q),\QQ) \arrow{r}&H_j(\Omega({L},q),\QQ)
\end{tikzcd}
\end{center}
for all $j\leq k$, we have the result.
\end{proof}
We now proceed to the proof of Theorem \ref{main-thm: foliations}.

\begin{proof}[Proof of Theorem \ref{main-thm: foliations}]
 Let $X^{\text{reg}}$ denote the set of manifold points of $X$, 
and let $N^{\text{reg}}:=\pi_N^{-1}(X^{\text{reg}})$. Fix $x\in X^{\text{reg}}$. 
By Theorem $3.27$ in \cite{Pat99}, we have:
\begin{equation}\label{eq:3.27}
h_{\text{top}}(N)\geq\limsup_{T\to\infty}\frac{1}{T}\log\int_Nn_T(\pi_N^{-1}(x),\bar{y})d\bar{y}=\limsup_{T\to\infty}\frac{1}{T}\log\int_{N^{\text{reg}}}n_T(\pi_N^{-1}(x),\bar{y})d\bar{y},\vspace{0.1cm}
\end{equation}
where $n_T(\pi_N^{-1}(x),\bar{y})$ denotes the number of horizontal geodesics of length at most $T$ between $\pi_N^{-1}(x)$ and $\bar{y}$. Since $\pi_N|_{N^{\text{reg}}}:N^{\text{reg}}\to X^{\text{reg}}$ is a Riemannian submersion, we can apply Fubini's Theorem to Equation \eqref{eq:3.27}:
\begin{align*}
h_{\text{top}}(N) &  \geq\limsup_{T\to\infty}\frac{1}{T}\log\int_{N^{\text{reg}}}n_T(\pi_N^{-1}(x),\bar{y})d\bar{y}\\
 & =\limsup_{T\to\infty}\frac{1}{T}\log\int_{X^{\text{reg}}}\left(\int_{\pi_N^{-1}(y)}n_T(\pi_N^{-1}(x),\bar{y})d\bar{y}|_{\pi_N^{-1}(y)}\right)dy.\vspace{0.1cm} 
\end{align*}

Now, choose $y\in X^{\text{reg}}$ and $\bar{y}\in\pi_N^{-1}(y)$. As discussed in Section \ref{SS:quotient geodesics}, 
every quotient geodesic in $X$ is the image under $\pi_N$ of a horizontal geodesic in $N$. Since $y$ is a manifold point of $X$, every quotient geodesic from $x$ to $y$ is the image of a unique horizontal geodesic between $\pi_N^{-1}(x)$ and $\bar{y}$. Therefore, letting $n_T(x,y)$ denote the number of quotient geodesics between $x$ and $y$, 
we get $n_T(\pi_N^{-1}(x),\bar{y})=n_T(x,y)$ and hence
\begin{align*}
h_{\text{top}}(N) & \geq\limsup_{T\to\infty}\frac{1}{T}\log\int_{X^{\text{reg}}}\left(\int_{\pi_N^{-1}(y)}n_T(x,y)d\bar{y}|_{\pi_N^{-1}(y)}\right)dy\\
 & =\limsup_{T\to\infty}\frac{1}{T}\log\int_{X^{\text{reg}}}n_T(x,y){\text{Vol}}(\pi_N^{-1}(y))dy.
\end{align*}
By a similar argument, for $y'\in\pi_M^{-1}(y)$, one has $n_T(x,y)=n_T(\pi_M^{-1}(x),y')$. Altogether, we get that
\begin{equation}\label{eq:n_T-M}
h_{\text{top}}(N)\geq\limsup_{T\to\infty}\frac{1}{T}\log\int_{X^{\text{reg}}}n_T(\pi_M^{-1}(x),y'){\text{Vol}}(\pi_N^{-1}(y))dy.\vspace{0.05cm} 
\end{equation}

Note that the horizontal geodesics of length at most $T$ between $\pi_M^{-1}(x)$ and $y'$ correspond to the critical points 
of the energy functional $E:\Omega^T(\pi_M^{-1}(x),y')\to\R$. Since a generic $y'$ is not a focal point of the fiber $\pi_M^{-1}(x)$, the energy functional is a Morse function and, by the Morse inequality, 
$$n_T(\pi_M^{-1}(x),y')\geq\sum_j b_j\left(\Omega^T(\pi_M^{-1}(x),y')\right).$$
But for any fiber $L$ of $\pi_M$, the spaces $\Omega(\pi_M^{-1}(x),y')$ and $\Omega(L,y')$ are homotopy equivalent: any choice of a path
$\gamma$ in $X^{\text{reg}}$ between $x$ and $\pi(L)$ induces a map $\Omega(\pi_M^{-1}(x),y')\to\Omega(L,y')$ sending a curve $c$ to $c\star \bar{\gamma}_c$, where $\bar{\gamma}_{c}$ is the unique horizontal lift of $\gamma$ starting at $c(1)$. It is easy to check that this map is a homotopy equivalence. Hence,
\begin{equation}\label{eq:Morse}
n_T(\pi_M^{-1}(x),y')\geq\sum_j b_j\left(\Omega^T(L,y')\right),
\end{equation}
where $L$ is a fixed fiber of $\pi_M$. Putting together Equations \eqref{eq:n_T-M} and \eqref{eq:Morse}, we get\vspace{0.05cm} 
\begin{align*}
h_{\text{top}}(N) & \geq\limsup_{T\to\infty}\frac{1}{T}\log\int_{X^{\text{reg}}}\left(\sum_j b_j\left(\Omega^T(L,y')\right)\right){\text{Vol}}(\pi_N^{-1}(y))dy\\
 & \geq \limsup_{i\to\infty}\frac{1}{Ci}\log\int_{X^{\text{reg}}}\left(\sum_j b_j\left(\Omega^{Ci}(L,y')\right)\right){\text{Vol}}(\pi_N^{-1}(y))dy,\vspace{0.05cm} 
\end{align*}
\todo{small changes made here (Point (3))}
where $C$ is the constant discussed in Proposition \ref{P:Gromov} satisfying $H_j(\Omega(L,y'))= \iota_*H_j(\Omega^{Ck}(L,y'))$ for every $k$ and every $j\leq k$. We thus have 
$b_j\left(\Omega^{Ci}(L,y'))\right)\geq b_j\left(\Omega(L,y'))\right)$ for all $j\leq i$, and hence\\
$$\sum_jb_j\left(\Omega^{Ci}(L,y')\right)\geq\sum_{j\leq i}b_j\left(\Omega(L,y')\right).\vspace{0.1cm}$$
Therefore, taking $i_0$ to be large enough such that $B_{Ci_0}(x)\supseteq X^{\text{reg}}$, it follows that
\begin{align*}
h_{\text{top}}(N) & \geq\limsup_{i\to\infty}\frac{1}{Ci}\log\int_{B_{Ci}(x)\cap X^{\text{reg}}}\left(\sum_{j\leq i}b_j\left(\Omega(L,y')\right)\right){\text{Vol}}(\pi_N^{-1}(y))dy\\
 & =\limsup_{\substack{i\to\infty\\ i\geq i_0}}\frac{1}{Ci}\log\left(\sum_{j\leq i}b_j\left(\Omega(L,y')\right)\int_{B_{Ci}(x)\cap X^{\text{reg}}}{\text{Vol}}(\pi_N^{-1}(y))dy\right)\\
 & =\limsup_{\substack{i\to\infty\\ i\geq i_0}}\frac{1}{Ci}\log\left(\sum_{j\leq i}b_j\left(\Omega(L,y')\right)\int_{ X^{\text{reg}}}{\text{Vol}}(\pi_N^{-1}(y))dy\right)\\
 & =\limsup_{\substack{i\to\infty\\ i\geq i_0}}\frac{1}{Ci}\log\left(\sum_{j\leq i}b_j\left(\Omega(L,y')\right){\text{Vol}}(N)\right)\\
  & =\limsup_{\substack{i\to\infty\\ i\geq i_0}}\frac{1}{Ci}\left(\log\sum_{j\leq i}b_j\left(\Omega(L,y')\right)+\log{\text{Vol}}(N)\right)\\
 & =\limsup_{\substack{i\to\infty}}\frac{1}{Ci}\log\sum_{j\leq i}b_j\left(\Omega(L,y')\right).
 \end{align*}
 
The above equation, together with the assumption that the topological entropy of $N$ is zero, implies that the sequence 
of the Betti numbers of $\Omega(L,y')$ has sub-exponential growth. 

Now, consider the fibration $\Omega(L,y')\to L\to M$. By assumption, $L$ is nilpotent and satisfies
$\sum_{i\geq 2}\dim\pi_i(L)\tensor\Q<\infty$. Therefore, $M$ is rationally elliptic by Theorem A.1 in \cite{GWY19}.

\end{proof}

We end this section with the proof of the main result of the paper, which is a special case of the following

\begin{proposition}\label{P:generaliz}
\todo{Actual proposition generalized}
Let $M$ be a simply-connected, closed Riemannian $G$-manifold, where $G$ is a compact Lie group. Assume that $M/G$ is isometric to the base of a submetry $\pi_N:N\to X$, where $N$ is a compact Riemannian manifold with zero topological entropy. Then M is rationally elliptic.
\end{proposition}

\begin{proof}
\todo{Proof simplified and kept with same generality as before (Point (4))}
Given a $G$-manifold $M$, we apply the Reduction Lemma 4.5 in \cite{GWY19} to obtain a new simply connected $\hat{G}$-manifold $\hat{M}:= M\times_G\hat{G}$ (where $\hat{G}$ can be taken e.g. $SU(n)$, with $n$ large enough that there exists an embedding $G\to SU(n)$) such that $M/G$ is isometric to $\hat{M}/\hat{G}$, $M$ is rationally elliptic if and only if $\hat{M}$ is, and furthermore the principal orbits of $\hat{G}$ are connected with finite fundamental group (and rationally elliptic universal cover, being a compact simply connected homogeneous space). Thus, Theorem \ref{main-thm: foliations} applies to $\hat{M}$, hence $\hat{M}$ is rationally elliptic, hence so is $M$.
\end{proof}

\section{Non-negatively curved, cohomogeneity three manifolds}\label{S:infinitesimally polar}

In this section, we apply Theorem \ref{main-cor:G-manifolds} to prove rational ellipticity of simply connected, 
non-negatively curved Riemannian manifolds which admit cohomogeneity three, infinitesimally polar actions. 
Before proceeding, we collect some lemmas required for the proof.

\begin{lemma}\label{lem:slice to tubular}
Suppose a Lie group $G$ acts isometrically on a closed Riemannian manifold $(M,\hat g)$. 
Fix $p \in M$, and let $L_p$ denote the orbit through $p$. Assume $N$ is an open subset of $(T_pL_p)^{\perp_{\hat{g}}}$ 
that is invariant under the action of $G_p$. Assume, moreover, that the map $f:(G \times N)/G_p \to M$ sending $[h,w]$ 
to $\exp_{h\cdot p}(h_*w)$ (exponential taken with respect to the metric $\hat g$) is a diffeomorphism onto its image, 
and let $B$ denote the image of $f$. Then for any $G_p$-invariant Riemannian metric $\widetilde g$ on $N$, 
there exists a $G$-invariant Riemannian metric $g$ on $B$ such that the exponential map $(N,\widetilde{g})\to (B,g)$ 
induces an isometry between the quotient spaces $N/{G_p}$ and $B/G$.
\end{lemma}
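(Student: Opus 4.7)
The plan is to descend a natural product metric on $G\times N$ to $(G\times N)/G_p$ and transport it to $B$ via $f$. Since $G_p$ is compact, by averaging one can choose a left-invariant, $\mathrm{Ad}(G_p)$-invariant Riemannian metric $Q$ on $G$. Equipping $G\times N$ with the product metric $Q\oplus\widetilde g$, the twisted $G_p$-action $k\cdot(h,w)=(hk^{-1},k_*w)$ is free and isometric: right translation by $k^{-1}$ preserves $Q$ (left invariance together with $\mathrm{Ad}(G_p)$-invariance forces right $G_p$-invariance), while the slice representation preserves $\widetilde g$ by hypothesis. Hence $(G\times N)/G_p$ inherits a Riemannian metric $\bar g$, and I would define $g:=(f^{-1})^*\bar g$ on $B$.

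Next I would verify that $g$ is $G$-invariant. The left-multiplication action $(h,w)\mapsto(gh,w)$ of $G$ on $G\times N$ preserves $Q\oplus\widetilde g$ by left invariance of $Q$ and commutes with the twisted $G_p$-action, so it descends to an isometric action on $(G\times N)/G_p$. Since $G$ acts on $(M,\hat g)$ by isometries, the map $f$ is $G$-equivariant, so this descended action is intertwined by $f$ with the original $G$-action on $B$; hence the latter is isometric with respect to $g$.

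The central step will be identifying $B/G$ with $N/G_p$ as metric spaces via $\exp_p^{\hat g}$. On $(G\times N,Q\oplus\widetilde g)$ the actions of $G$ (left multiplication) and $G_p$ (twisted diagonal) commute, are free, and are both isometric, so they combine into a free isometric action of $G\times G_p$. Quotienting first by $G$ identifies $(G\times N)/G$ with $(N,\widetilde g)$ via $[h,w]\mapsto w$, because $\{e\}\times N$ is $Q\oplus\widetilde g$-orthogonal to the $G$-orbits; a further quotient by $G_p$ then produces $N/G_p$ with the quotient metric from $\widetilde g$. Quotienting first by $G_p$ and then by $G$ produces $B/G$ with the quotient metric from $g$. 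Under the natural identification, the map induced by $\exp_p^{\hat g}$ sends $[w]\in N/G_p$ to $[f([e,w])]=[\exp_p^{\hat g}(w)]\in B/G$, which matches the identification, so it is an isometry.

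The main obstacle will be justifying the interchange of the two quotients rigorously. The cleanest route is to express both quotient distances as the infimum of $(Q\oplus\widetilde g)$-lengths of piecewise smooth curves joining $(G\times G_p)$-orbits; this formulation is manifestly symmetric in $G$ and $G_p$, so both orders of quotienting yield the same metric on $(G\times N)/(G\times G_p)$. Once this is in place, the remaining assertions, namely the $G$-invariance of $g$ and the identification of the map induced on quotients with $\exp_p^{\hat g}$, are formal.
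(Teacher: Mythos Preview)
Your argument is correct and genuinely different from the paper's. The paper builds $g$ directly on $B$ by splitting $TB=\mathcal V\oplus\mathcal K$, where $\mathcal K=\ker d\pi$ for the closest-point projection $\pi:B\to L_p$ and $\mathcal V$ is spanned by action fields $X^*$ with $X\perp\mathfrak g_{\pi(q)}$ (with respect to a bi-invariant metric on $G$); it then declares the two summands $g$-orthogonal, sets $g=\hat g$ on $\mathcal V$, and pulls $\widetilde g$ back to $\mathcal K$ via $h^{-1}$ and $\exp_p^{-1}$. Well-definedness, $G$-invariance, and the isometry of quotients are all checked by hand. Your associated-bundle route is cleaner: once you know $Q\oplus\widetilde g$ is bi-invariant for the commuting $G$- and $G_p$-actions, the double-quotient argument does everything at once. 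The paper's construction has the mild advantage that the orbit metric is the original $\hat g$, but this is not needed for the application in the subsequent proposition.

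One small slip: the combined $(G\times G_p)$-action on $G\times N$ is \emph{not} free in general (if $k\in G_p$ fixes $w\in N$, then $(hkh^{-1},k)$ fixes $(h,w)$; this certainly happens when $0\in N$). Fortunately you never use freeness of the product action---your final paragraph only needs that $G$ and $G_p$ each act freely separately, so that both first quotients are Riemannian submersions, and then the iterated quotient distance in either order equals the infimum of $(Q\oplus\widetilde g)$-lengths of curves joining $(G\times G_p)$-orbits. Just delete the word ``free'' from that sentence. Also note that, like the paper (which fixes a bi-invariant metric on $G$), you are implicitly using compactness of $G_p$ to produce the $\mathrm{Ad}(G_p)$-invariant $Q$; this is harmless in the intended setting.
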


\begin{proof}
Let $F:B\to L_p$ denote the closest-point projection map with respect to $\hat{g}$. By the assumptions of the Lemma, for $p'\in L_p$ 
and $v\in (T_{p'}L_p)^{\perp_{\hat{g}}}$ such that $\exp_{p'}(v)\in B$, we have $F(\exp_{p'}(v))=p'$. 
Define distributions $\V$ and $\K$ of $TB$, as follows:
\begin{itemize}
\item $\K_q=\ker(d_qF)$.
\item $\V_q=\{X^*_q\mid X\in \g,\, X\perp\g_{F(q)}\}$, where $\g$ and $\g_{F(q)}$ denote the Lie algebras of $G$ 
and $G_{F(q)}$, respectively, and where the orthogonality is with respect to some fixed bi-invariant metric on $G$.
\end{itemize}

Notice the following things about $\V$ and $\K$:
\begin{enumerate}
\item {\bf They are distributions:} on the one hand, it is clear that $\dim\K_q=\operatorname{codim}(L_p)$ is independent of $q$ and that $\K$ is a distribution. As for $\V$, notice that, since the isotropy group $G_q$ is contained in $G_{F(q)}$ for every $q\in B$, one has $\dim \V_q=\dim\V_{F(q)}=\dim G-\dim G_{F(q)}=\dim L_p$ for every $q\in B$. Hence the dimension is constant, and it is easy to see that $\V$ is then a distribution as well.
\item {\bf They are $G$-invariant:} On the one hand, since $F$ is $G$-equivariant, we have $F(h\cdot q)=h\cdot F(q)$ for any $h\in G$, and thus $(d_{h\cdot q}F)\circ h_*=h_*\circ d_qF$, which implies that $h_*(\K_q)\subseteq\K_{h\cdot q}$
and thus $\K$ is $G$-invariant. Furthermore, given any $h\in G$ and letting $h:M\to M$ denote the corresponding smooth map, we have:
\begin{align*}
\V_{h\cdot q}=&\{X^*_{h\cdot q}\mid X\in\g,\, X\perp\g_{h\cdot F(q)}\}\\
=&\{X^*_{h\cdot q}\mid X\in\g,\, X\perp Ad(h)\g_{F(q)}\}&& \textrm{(since $\g_{h\cdot F(q)}=Ad(h)\g_{F(q)}$)}\\
=&\{(Ad(h)Y)^*_{h\cdot q}\mid Y\in\g,\, Y\perp \g_{F(q)}\}&& \textrm{(since $Ad(h):\g\to \g$ is an isometry)}\\
=&\{h_*(Y^*_q)\mid Y\in\g,\, Y\perp \g_{F(q)}\}&& \textrm{(since $h_*(Y^*_q)=(Ad(h)Y)^*_{h\cdot q}$)}\\
=&h_*\V_q.
\end{align*}
Therefore, $\V$ is $G$-invariant as well.
\item {\bf They are complementary}:
\begin{itemize}
\item As observed above, $\dim \V+\dim \K=\dim B$,
\item for all $q\in B$, $\K=\ker d_qF$ while $d_qF|_{\V_q}:\V_q\to \V_{F(q)}=T_{F(q)}L_p$ is injective, therefore $\K\cap \V=0$.
\end{itemize}
\end{enumerate}

By the discussion above, every vector in $TB$ can be written uniquely as $x+v$ where $x\in \K$ and $v\in \V$.
Now, define the metric $g$ on $B$ as follows: given $q=f([h,w])=\exp_{h\cdot p}(h_*w)$, let
$$g_q(x_1+v_1,x_2+v_2)=\hat{g}_q(v_1,v_2)+\widetilde{g}_{\exp_p^{-1}(h^{-1}\cdot q)}((\exp_p^{-1})_*\circ(h^{-1})_*x_1,(\exp_p^{-1})_*\circ(h^{-1})_*x_2).$$
This metric has the following properties:
\begin{enumerate}
\item $g$ is well defined and a smooth metric: given a different representation $q=f([hm^{-1}, m_*w])$ for some $m\in G_p$, 
we have:
{\small
\begin{align*}
\hspace{0.1cm}g_q(x_1+v_1,x_2+v_2) & = \hat{g}_q(v_1,v_2)+\widetilde{g}_{\exp_p^{-1}(mh^{-1}\cdot q)}\left((\exp_p^{-1})_*\circ m_*((h^{-1})_*x_1),(\exp_p^{-1})_*\circ m_*((h^{-1})_*x_2)\right)\\
 & = \hat{g}_q(v_1,v_2)+\widetilde{g}_{\exp_p^{-1}(mh^{-1}\cdot q)}\left((m\circ\exp_p^{-1})_*((h^{-1})_*x_1),(m\circ\exp_p^{-1})_*((h^{-1})_*x_2)\right)\\
 & =\hat{g}_q(v_1,v_2)+\widetilde{g}_{\exp_p^{-1}(h^{-1}\cdot q)}((\exp_p^{-1})_*\circ(h^{-1})_*x_1,(\exp_p^{-1})_*\circ(h^{-1})_*x_2),
\end{align*}}
where the second equality follows from the $G_p$-equivariance of the exponential map and the third equality holds because $\widetilde g$ 
is $G_p$-invariant. This proves well-definedness. Since $\V$ and $\K$ and
 complementary, it follows that $g$ is a metric. Finally, since $\V$ and $\K$ are smooth, the projections onto $\V$ and $\K$ are smooth, which implies that both terms in the definition of $g$ are smooth.
\item $g$ is $G$-invariant: given $z_i=x_i+v_i\in T_qB$, $i=1,2$, with $q=f([h,w])$, and given $m\in G$, 
it follows from $G$-invariance of $\V$ and $\K$ that $m_*z_i=m_*x_i+m_*v_i$ is the decomposition of $m_*z_i$
into $\V$ and $\K$-components. Furthermore, since $m\cdot q=f([mh,w])$, the second term in the expression 
for $g_{m\cdot q}(m_*x_1+m_*v_1,m_*x_2+m_*v_2)$ equals
$$\widetilde{g}_{\exp_p^{-1}\left((mh)^{-1}\cdot (m\cdot q)\right)}((\exp_p^{-1})_*\circ(mh)^{-1}_*(m_*x_1),(\exp_p^{-1})_*\circ(mh)^{-1}_*(m_*x_2)).$$
Altogether, we get that
{\small
\begin{align*}
\hspace{0.3cm}g_{m\cdot q}(m_*z_1,m_*z_2) & =g_{m\cdot q}(m_*x_1+m_*v_1,m_*x_2+m_*v_2)\\
 & =\hat{g}_{m\cdot q}(m_*v_1,m_*v_2)+\widetilde{g}_{\exp_p^{-1}(h^{-1}\cdot q)}((\exp_p^{-1})_*\circ(h^{-1})_*x_1,(\exp_p^{-1})_*\circ(h^{-1})_*x_2)\\
 & =\hat{g}_q(v_1,v_2)+\widetilde{g}_{\exp_p^{-1}(h^{-1}\cdot q)}((\exp_p^{-1})_*\circ(h^{-1})_*x_1,(\exp_p^{-1})_*\circ(h^{-1})_*x_2)\\
 & =g_q(x_1+v_1,x_2+v_2)\\
 & =g_q(z_1,z_2).
\end{align*}
}
\end{enumerate}

We are left to prove that the exponential map $(N,\widetilde g) \to (B, g)$ induces an isometry between the orbit spaces 
$N/{G_p}$ and $B/G$. It is enough to check this on the regular part, so let $y_*\in T_{w_*}(N/G_p)^{\textrm{reg}}$, 
and let $y\in T_wN$ denote a $\widetilde{g}$-horizontal lift of $y_*$ via the canonical projection $N\to N/{G_p}$. 
Define $q:=\exp_p w$, and notice $x:=(\exp_p)_*(y)\in \K_{q}$. In particular, $x$ is $g$-perpendicular to $\V$ 
(since $\V$ and $\K$ are $g$-orthogonal by the definition of $g$) and $x$ is $g$-perpendicular to all action fields $X^*$ 
with $X\in \g_{F(q)}$ (because the restriction of $g$ to $\K$ is equal to $\widetilde{g}$ and $y$ is $\widetilde{g}$-horizontal). By the definition of $\V$, it then turns out that $x$ is $g$-perpendicular to all action fields $X^*$ with $X\in \g$, 
or equivalently, $x$ is $g$-horizontal. Let $q_*$ denote the image of $q$ in $B/G$, and let $x_*\in T_{q_*}(B/G)$ 
denote the projection of $x$. We conclude that
$$||x_*\|_{B/G}\stackrel{(1)}{=}\|x\|_g\stackrel{(2)}{=}\|y\|_{\widetilde{g}}=\|y_*\|_{N/{G_p}},$$
where equality $(1)$ follows from the fact that $(B,g)\to B/G$ is a Riemannian submersion on the regular part and $x$ 
is a $g$-horizontal lift of $x_*$, and $(2)$ follows from the definition of $g$. This proves the lemma.
\end{proof}

The following result is briefly discussed in a Remark in \cite{GWY19}. We provide a more elaborate proof here.

\begin{proposition}\label{prop:orbifold metric}
Suppose $(M,\hat{g})$ is a closed Riemannian manifold which admits an isometric action by a Lie group $G$ 
such that the quotient space $M/G$ is an orbifold. Then given an orbifold metric $g_0$ on $M/G$, there exists a $G$-invariant Riemannian 
metric $g$ on $M$ such that the projection $\pi:(M,g)\to (M/G,g_0)$ is a
\todo{changed word (Point (5))}
 submetry.
\end{proposition}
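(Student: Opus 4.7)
My approach is to construct $g$ by applying Lemma \ref{lem:slice to tubular} on a finite cover of $M$ by $G$-invariant tubular neighborhoods and then gluing the local models via a $G$-invariant partition of unity, with a careful arrangement ensuring that the gluing preserves the orbifold Riemannian submersion property.

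By the slice theorem applied to $\hat g$, I first cover $M$ with finitely many $G$-invariant tubular neighborhoods $B_\alpha=f_\alpha((G\times N_\alpha)/G_{p_\alpha})$ of orbits $L_{p_\alpha}$. Because the $G$-action is infinitesimally polar -- equivalently, because $M/G$ is an orbifold -- each slice representation of $G_{p_\alpha}$ on $N_\alpha$ is polar, and $N_\alpha/G_{p_\alpha}$ is an orbifold naturally isometric via $f_\alpha$ to $\Pi(B_\alpha)\subset M/G$. Lifting the local orbifold-chart description of $g_0$ to $N_\alpha$ produces a $G_{p_\alpha}$-invariant Riemannian metric $\tilde g_\alpha$ on $N_\alpha$ whose quotient agrees with $g_0|_{\Pi(B_\alpha)}$, and Lemma \ref{lem:slice to tubular} then yields a smooth $G$-invariant Riemannian metric $g_\alpha$ on $B_\alpha$ realizing $\Pi|_{B_\alpha}:(B_\alpha,g_\alpha)\to(\Pi(B_\alpha),g_0|_{\Pi(B_\alpha)})$ as an orbifold Riemannian submersion.

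The main obstacle is the gluing. A naive convex combination $\sum_\alpha\rho_\alpha g_\alpha$ via a $G$-invariant partition of unity $\{\rho_\alpha\}$ lifted from $M/G$ produces a smooth $G$-invariant Riemannian metric, but it need not realize $\Pi$ as an orbifold Riemannian submersion onto $g_0$: the $g_\alpha$-horizontal distributions $\K^\alpha=\ker d\pi_\alpha$ generally differ on overlaps, so no single tangent vector is simultaneously horizontal in every $g_\alpha$, and the induced quotient metric can be strictly larger than $g_0$. I would resolve this by ensuring that all local metrics share a common $G$-invariant ``horizontal ingredient'': the Lemma's local projections $s^\alpha:TB_\alpha\to \V^\alpha$ (onto the constant-rank vertical distribution along $\K^\alpha$) can be averaged with the same partition $\{\rho_\alpha\}$ into a global smooth $G$-equivariant bundle map $s:TM\to TM$, and setting $g(u,v):=\hat g(s(u),s(v))+\Pi^\ast g_0(u,v)$ -- with $\Pi^\ast g_0(u,v):=g_0(d\Pi u,d\Pi v)$ interpreted through orbifold charts at the singular strata of $M/G$ -- produces the desired Riemannian metric. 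Verifying smoothness and positive-definiteness of $g$ across singular orbits is the technical heart of the argument, and is where the infinitesimally polar hypothesis is used in essential combination with Lemma \ref{lem:slice to tubular}.
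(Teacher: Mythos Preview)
Your local step matches the paper's---tube cover, polarity of the slice representations, Lemma \ref{lem:slice to tubular}---with one omission: an orbifold chart for $g_0$ produces a $W_{p_\alpha}$-invariant metric on the \emph{section} $\Sigma_{p_\alpha}\subset N_\alpha$, not a $G_{p_\alpha}$-invariant metric on the full slice $N_\alpha$; the paper invokes Mendes' extension theorem \cite{Men16} here.

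The genuine gap is in the gluing. You misidentify the distributions: in Lemma \ref{lem:slice to tubular}, $\K^\alpha=\ker d\pi_\alpha$ is the kernel of the radial projection onto the central orbit $L_{p_\alpha}$, not the $g_\alpha$-horizontal space for $\Pi$, and $\V^\alpha$ is only a \emph{proper} subspace of the $\Pi$-vertical (orbit-tangent) space at a regular point $q$ in a tube around a singular orbit. Indeed, any action field $X^*_q$ with $X\in\g_{p_\alpha}\setminus\g_q$ is orbit-tangent yet lies in $\K^\alpha_q$, so $s^\alpha(X^*_q)=0$; since also $d\Pi(X^*_q)=0$, your formula gives $g(X^*_q,X^*_q)=0$, and positive-definiteness fails unless the averaging rescues it---which you do not show. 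Even setting this aside, you give no argument that the $g$-horizontal space maps isometrically onto $(T_{q_*}(M/G),g_0)$, which is precisely what must be checked. The paper's gluing avoids these issues by a different device from \cite{GWY19}: write $g_\alpha=\hat g(S_\alpha\cdot,\cdot)$, set $S:=\bigl(\sum_\alpha f_\alpha S_\alpha^{-1}\bigr)^{-1}$ and $g:=\hat g(S\cdot,\cdot)$. By Lemma 3.4 of \cite{GWY19}, the projection $(\RR^n,\langle A\cdot,\cdot\rangle)\to(\RR^k,\langle B\cdot,\cdot\rangle)$ is a Riemannian submersion iff the upper-left $k\times k$ block of $A^{-1}$ equals $B^{-1}$; this condition is affine in $A^{-1}$, so it survives convex combination of the $S_\alpha^{-1}$, and the submersion property onto $g_0$ follows automatically.
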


\todo{Proof divided into two, with proofs modified, according to referee's suggestions (Point (8))}
The proof is divided in two parts: first, in Proposition \ref{P:Step1}, we show how to lift metrics locally around $G$-orbits. Next, in Proposition \ref{P:Step2}, we show how to glue local metrics using Lemma 3.4 in \cite{GWY19}, but giving a more global argument for it.

\begin{proposition}\label{P:Step1}
Suppose $(M,\hat{g})$ is a closed Riemannian manifold which admits an isometric action by a Lie group $G$ 
such that the quotient space $M/G$ is isometric to a Riemannian orbifold. Let $\pi:M\to M/G$ be the quotient map, and let $U=B_{\epsilon}(p_*)$ be a small neighbourhood of $p_*\in M/G$. Then for any orbifold metric $g_0$ on $U$, there exists a $G$-invariant Riemannian 
metric $g$ on $\pi^{-1}(U)$ such that $\pi|_{\pi^{-1}(U)}$ is a submetry.
\end{proposition}

\begin{proof}
Choose $p\in \pi^{-1}(p_*)$. By Lytchak-Thorbergsson \cite{LT10}, the action of $G_p$ on $\nu_pL_p$ is polar. In particular, there is a totally geodesic section $\Sigma_p\subseteq \nu_pL_p$ acted on by a finite group $W_p$ so that the inclusion $\Sigma_p\embedded \nu_pL_p$ induces a homeomorphism ${\Sigma_p}/W_p\to{\nu_pL_p}/G_p$ (cf. Section \ref{S:polar}). By Theorem 1.4 in \cite{LT10}, for any $W_p$-invariant neighborhood $\bar{U}\subseteq \Sigma_{p}$ of the origin the composition $\bar{U}\to {\bar{U}}/{W_p}\to U$ is an orbifold chart for $M/G$. In particular, there is a $W_{p}$-invariant metric $\bar{g}$ on $\bar{U}$ so that the map $\varphi:{\bar{U}}/{W_p}\to U$ induced by the map above becomes an isometry.

By Theorem 1.2 in \cite{Men16}, there exists a $G_{p}$-invariant metric $\widetilde{g}$ on $N:=\cup_{h\in G_{p}}h\cdot\bar{U}$ such that the maps $(N,\widetilde{g})/G_p\to (\bar{U},\bar{g})/W_p\to (U,g_0)$ are all isometries. By Lemma \ref{lem:slice to tubular}, there exists a $G$-invariant metric $g$ on $\pi^{-1}(U)$ such that the map $(N,\widetilde{g})/G_p\to (\pi^{-1}(U),g)/G$ is an isometry. In particular, the projection $(\pi^{-1}(U),g)\to (\pi^{-1}(U),g)/G\simeq (U, g_0)$ is a manifold submetry.
\end{proof}

For the next result, recall that given a Riemannian metric $g$ on a manifold $M$, its \emph{cometric} $g^*$ is the Euclidean structure on $T^*M$ obtained declaring an orthonormal basis of $T^*_pM$ as the dual basis of an orthonormal basis of $T_pM$.

\begin{proposition}\label{P:Step2}
Let $X=X_1\cup X_2$ be a metric space, $M=M_1\cup M_2$ be a manifold, and $\pi:M\to X$ a map with smooth fibers. Let $(M_i, g_i)$, $i=1,2$ be Riemannian metrics such that $\pi_i:(M_i,g_i)\to X_i$ are manifold submetries, and let $\{f_1, f_2\}$ a basic partition of unity subordinate to $\{M_1, M_2\}$. Then, letting $g$ be the Riemannian metric on $M$ whose cometric is $g^*=f_1g_1^*+f_2g_2^*$, we have that $\pi:(M,g)\to X$ is a manifold submetry as well.
\end{proposition}
\begin{proof}
\todo{New proof, improved from the version of Grove Yeager Wilking (Point (8))}
Let $X^{pr}$ be the manifold part (or \emph{principal part}) of $X$, with Riemannian metric $g_0$, and let $M^{pr}:=\pi^{-1}(X^{pr})$. Around $p_*\in X^{pr}\cap X_i$ and $p\in \pi^{-1}(p_*)\in M_i$, the map $\pi$ is a Riemannian submersion and, by definition, the adjoint map $(d_p\pi)^*:(T_{p_*}^*X,g_0)\to (T_p^*M_i,g_i^*)$ is an isometric immersion. In fact, the image of $(d_p\pi)^*$ is $\operatorname{Ann}(T_pL_p):=\{\alpha\in T_p^*M\mid \alpha(T_pL_p)=0\}$ \emph{independently of the metric}. In particular, if $p\in M^{pr}\cap M_1\cap M_2$, then
\[
(d_p\pi)^*:(T_{p_*}^*X,g_0)\to (\operatorname{Ann}(T_pL_p),g_i^*)
\]
is an isometry with respect to both cometrics. In particular, if $g^*=f_1(p)g_1^*+f_2(p)g_2^*$, then $(d_p\pi)^*:(T_{p_*}^*X,g_0)\to (\operatorname{Ann}(T_pL_p),g^*)$ is also an isometry, that is, $\pi:(M^{pr},g)\to (X^{pr},g_0)$ is a Riemannian submersion. Since all fibers of $\pi$ are already assumed to be smooth, and general fibers of $\pi$ are Hausdorff limits of principal fibers, we obtain that $\pi:(M,g)\to X$ is a manifold submetry.
\end{proof}

With Proposition \ref{prop:orbifold metric} in hand, we proceed to the proof of Theorem \ref{main-thm:infinitesimally polar}.

\begin{proof}[Proof of Theorem \ref{main-thm:infinitesimally polar}]
\todo{Proof made precise, making sure all metrics involved are invariant (Point (9))}
Since the $G$-action on $M$ is infinitesimally polar, the quotient space $M/G$ is a non-negatively curved orbifold by result of Lytchak-Thorbergsson \cite{LT10}. By the classification of compact, non-negatively curved $3$-orbifolds in \cite{KL14}, Proposition 5.7, $M/G$ is diffeomorphic to to one of:
\begin{itemize}
\item  $N'/\Gamma$, with $N'=S^3$, or $T^3$ with their constant sectional curvature metrics,
\item  $N'/\Gamma$ where $N'=S^1\times S^2$ is endowed with the product of round metrics and $\Gamma=1\times \Gamma'$, $\Gamma'\subset \operatorname{Isom}(S^2)$ finite.
\item the metric product $S^1\times (S^2_{p,q}/\Gamma)$ where $S^2_{p,q}$ is a simply-connected bad 2-orbifold equipped with its unique Ricci soliton metric, and $\Gamma\subset \operatorname{Isom}(S^2_{p,q})$ is finite.
\end{itemize}

In the first two cases, it is known that the metrics for $N'$ above have $h_{top}(N')=0$. In these cases Proposition \ref{prop:orbifold metric} gives a $G$-invariant metric on $M$ such that $M/G$ is isometric to $N'/\Gamma$, therefore $M$ is rationally elliptic by Theorem \ref{main-cor:G-manifolds}.

In the last case of $N'=S^1\times S^2_{p,q}$, this orbifold is diffeomorphic to $S^1\times (S^3/S^1)$ where $S^1\subset \C$ acts on $S^3\subset \C^2$ via the weighted Hopf action $w\cdot (z_1, z_2)=(w^pz_1,w^qz_2)$. As such, there is a rotationally symmetric metric $(S^2_{p,q},g_{quot})$ such that $(S^3, g_{round})\to (S^2_{p,q},g_{quot})$ is a submetry. Furthermore, the Ricci soliton metric $(S^2_{p,q},g_{sol})$ is rotationally symmetric as well,  and with the same two fixed points. In particular, we can identify the isometry groups of $(S^2_{p,q},g_{sol})$ and $(S^2_{p,q},g_{quot})$ (thus $\Gamma\subset \operatorname{Isom}(S^2_{p,q}, g_{quot})$). Letting $N=S^1\times S^3$ with the product of round metrics, we can consider $X:= S^1\times (S^2_{p,q}/\Gamma)$ with the metric induced by $g_{quot}$, as the base of the manifold submetry $N\to N'\to X$ with $h_{top}(N)=0$. Proposition \ref{prop:orbifold metric} gives a $G$-invariant metric on $M$ such that $M/G$ is isometric to $X$, and from Proposition \ref{P:generaliz} we have that $M$ is rationally elliptic.


\end{proof}

\bibliographystyle{plain}		

\begin{thebibliography}{1}

\bibitem{BBI01}
D. Burago, Y. Burago, S. Ivanov, \emph{A Course in Metric Geometry}, American Mathematical Society, Providence, 2001.

\bibitem{BGP92}
Y. Burago, M. Gromov, G. Perel'man, \emph{A.D. Alexandrov spaces with curvature bounded below},
Russian Mathematical Surveys {\bf 2} (1992), no. 47, 1--58.

\bibitem{FHT01}
Y. F\'elix, S. Halperin, J.-C. Thomas, \emph{Rational Homotopy Theory},  Springer, 2001.

\bibitem{FH79}
J. B. Friedlander, S. Halperin, \emph{An Arithmetic Characterization of the Rational Homotopy Groups of Certain Spaces}, Inventiones mathematicae {\bf 53} (1979), 117--134.

\bibitem{Gor22}
C. Gorodski, \emph{Topics in polar actions}, preprint, arXiv:2208.03577.

\bibitem{GH87}
K. Grove, S. Halperin, \emph{Dupin hypersurfaces, group actions and the double mapping cylinder}, J. Differ. Geom. {\bf 26} (1987), no. 3, 429--459.

\bibitem{GWY19}
K. Grove, B. Wilking, J. Yeager, \emph{Almost non-negative curvature and rational ellipticity in cohomogeneity two}, 
Ann. Inst. Fourier {\bf 69} (2019), no. 7, 2921--2939.

\bibitem{GZ12}
K. Grove, W. Ziller, \emph{Polar manifolds and actions}, J. Fixed Point Theory App. {\bf11} (2012), 279--313.

\bibitem{Hat02}
A. Hatcher, \emph{Algebraic Topology}, Cambridge University Press, Cambridge, 2002.



\bibitem{KL14}
B. Kleiner, J. Lott, \emph{Geometrization of three-dimensional orbifolds via Ricci flow}, 
Ast\'erisque (2014), no. 365, 101--177.

\bibitem{KR24}
E. Khalili Samani, M. Radeschi, \emph{On the topology of leaves of singular Riemannian foliations}, Rev. Mat. Iberoam. {\bf 40} (2024), no. 1, pp. 111--128.

\bibitem{LK22}
A. Lytchak, V. Kapovich, \emph{Structure of submetries}, Geom. Top. {\bf26} (2022), no. 6, 2649--2711.

\bibitem{LT10}
A. Lytchak, G. Thorbergsson, \emph{Curvature explosion in quotients and applications}, 
J. Differ. Geom. {\bf 85} (2010), no. 1, 117--140.


\bibitem{Men16}
R. Mendes, \emph{Extending tensors on polar manifolds}, Math. Ann. {\bf 365} (2016), 1409--1424.

\bibitem{MR20}
R. Mendes, M. Radeschi, \emph{Laplacian algebras, manifolds submetries and their inverse invariant theory problem}, 
Geom. Func. Anal. {\bf 30} (2020), 536--573.

\bibitem{Pat99}
G. Paternain, \emph{Geodesic Flows}, Progress in Mathematics, Vol. 180, Birkh\"{a}user Boston Inc., Boston, MA, 1999.

\bibitem{PP95}
G. P. Paternain, M. Paternain, \emph{Topological Entropy Versus Geodesic Entropy}, International Journal of Mathematics {\bf 5} (1994), no. 2, 213--218.

\bibitem{PP06}
G. Paternain, J. Petean, \emph{Zero entropy and bounded topology}, Comment. Math. Helv. {\bf 81} (2006), 287--304.

\bibitem{Ver}
A. Verona, \emph{Stratified mappings - structure and triangulability}, Lecture Notes in Mathematics {\bf 1102}. Subseries: Mathematisches Institut der Universität und MPI, Bonn, Vol. 4. Berlin etc.: Springer-Verlag. IX, 160 p. DM 26.50 (1984).
\end{thebibliography}

\end{document}